\theoremstyle{plain}
\newtheorem{prop}{Proposition}
\newtheorem{theo}[prop]{Theorem}
\newtheorem{lemm}[prop]{Lemma}
\theoremstyle{remark}
\theoremstyle{definition}
\newtheorem{defi}[prop]{Definition}
\newtheorem{assu}[prop]{Assumption}
\newcommand{\PP}{{\mathbb P}}
\newcommand{\G}{{\mathbb G}}
\newcommand{\Z}{{\mathbb Z}}
\newcommand{\cA}{{\mathcal A}}
\newcommand{\cE}{{\mathcal E}}
\newcommand{\cH}{{\mathcal H}}
\newcommand{\cI}{{\mathcal I}}
\newcommand{\cL}{{\mathcal L}}
\newcommand{\cT}{{\mathcal T}}
\newcommand{\eqto}{\stackrel{\lower1.5pt\hbox{$\scriptstyle\sim\,$}}\to}
\DeclareMathOperator{\Pic}{Pic}
\DeclareMathOperator{\Spec}{Spec}
\DeclareMathOperator{\End}{End}
\DeclareMathOperator{\Br}{Br}
\DeclareMathOperator{\Tor}{Tor}
\begin{document}
\title[Stable rationality of surface bundles]{Stable rationality of Brauer-Severi surface bundles}
\author{Andrew Kresch}
\address{
  Institut f\"ur Mathematik,
  Universit\"at Z\"urich,
  Winterthurerstrasse 190,
  CH-8057 Z\"urich, Switzerland
}
\email{andrew.kresch@math.uzh.ch}
\author{Yuri Tschinkel}
\address{
  Courant Institute,
  251 Mercer Street,
  New York, NY 10012, USA
}

\address{Simons Foundation, 160 Fifth Av., New York, NY 10010, USA}
\email{tschinkel@cims.nyu.edu}

\date{September 28, 2017} 

\begin{abstract}
For sufficiently ample linear systems on rational surfaces we show that a very general
associated Brauer-Severi surface bundle is not stably rational.
\end{abstract}

\maketitle

\section{Introduction}
\label{sec:introduction}
This paper extends the study of stable rationality of conic bundles
over rational surfaces in \cite{HKTconic} to the case of Brauer-Severi surface bundles.
Our main result is:
\begin{theo}
\label{thm.main}
Let $k$ be an uncountable algebraically closed field of characteristic different from $3$,
$S$ a rational smooth projective surface over $k$, and
$L$ a very ample line bundle on $S$ such that $H^1(S,L)=0$, and the complete linear system
$|L|$ contains a nodal reducible curve $D=D_1\cup D_2$, where $D_1$ and $D_2$ are
smooth of positive genus, and contains a curve with $\mathsf{E}_6$-singularity.
Then the Brauer-Severi surface bundle corresponding to a very general element
of $|L|$ with nontrivial unramified cyclic degree $3$ cover is not stably rational.
\end{theo}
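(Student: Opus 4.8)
The plan is to apply the specialization method of Voisin and Colliot-Th\'el\`ene--Pirutka, in the form used for conic bundles in \cite{HKTconic}. Since $k$ is uncountable, to prove that a very general member of the family is not stably rational it suffices to exhibit a suitable special Brauer-Severi surface bundle $X_0$ in the family, over a special element of $|L|$, together with a resolution of singularities $\rho\colon\tilde X_0\to X_0$ such that (i) $\rho$ is universally $\mathrm{CH}_0$-trivial, and (ii) $\tilde X_0$ carries a nonzero class in an unramified cohomology group, which is a stable birational invariant. The hypotheses $H^1(S,L)=0$ and the very ampleness of $L$ guarantee that $|L|$ is a sufficiently large, well-behaved linear system in which the prescribed special members — the nodal reducible curve $D=D_1\cup D_2$ and the $\mathsf{E}_6$-singular curve — all occur, and that the universal family of Brauer-Severi surface bundles degenerates to the models below.

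\textbf{The degenerate models.} Working in characteristic $\neq 3$ so that $\mu_3$ is \'etale, a Brauer-Severi surface bundle over $S$ is determined by an order-$3$ class $\alpha\in\Br(k(S))$ ramified along a curve $C\in|L|$, whose residue along $C$ is the datum of the nontrivial unramified cyclic degree $3$ cover appearing in the statement; the positive genus forced by ampleness is exactly what makes $\Pic$ of the relevant components have nontrivial $3$-torsion, so that such covers exist. I would use the two prescribed curves for complementary purposes: over the nodal reducible curve $D=D_1\cup D_2$ one builds a model whose degenerate fibres are mild enough to admit a universally $\mathrm{CH}_0$-trivial resolution, the reducibility serving to separate the residue contributions of $D_1$ and $D_2$; the $\mathsf{E}_6$-curve furnishes the local degeneration at which the obstruction class is actually supported and detected. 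Reconciling these two specializations — both limits of the same family — and transporting the nonvanishing computation from the $\mathsf{E}_6$-model to the model that carries the good resolution is itself a delicate part of the argument.

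\textbf{Nonvanishing of the invariant.} Next I would compute, via the Bloch--Ogus/Gersten complex, the relevant unramified group with $\mu_3$-coefficients — the natural candidate being the $3$-torsion unramified Brauer group $H^2_{\mathrm{nr}}(k(\tilde X_0),\mu_3)$, which arises \emph{not} from the defining class $\alpha$ (trivial on the total space of the Brauer-Severi scheme) but from the combined discriminant-and-cover data. The class is produced as the unramified representative of a cup product of the cyclic-cover class in $H^1(\cdot,\mu_3)$ with a class recording the configuration of $D$; one checks that all its residues along the components of the discriminant vanish, by the reciprocity built into the Brauer-Severi bundle, and that it is nonzero using the $\mathsf{E}_6$ local model together with the nontriviality of the covers on the positive-genus curves $D_1,D_2$.

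\textbf{Universal $\mathrm{CH}_0$-triviality, and the main obstacle.} Finally I would verify that $\rho\colon\tilde X_0\to X_0$ is universally $\mathrm{CH}_0$-trivial, a fibrewise statement over every, possibly non-closed, point of $X_0$: each fibre of $\rho$ must have universally trivial Chow group of zero-cycles. Away from the singularities of $D$ the degenerate Brauer-Severi surfaces are unions of rational surfaces whose resolutions are patently $\mathrm{CH}_0$-trivial, so the real work concentrates over the nodes of $D$ and over the $\mathsf{E}_6$-point, where several degenerate components meet. I expect this to be the main obstacle: the degeneration must be chosen so as to be simultaneously singular enough to carry the nonzero unramified class of the previous step and mild enough — only rational, $\mathrm{CH}_0$-trivial singular fibres — for a universally $\mathrm{CH}_0$-trivial resolution to exist, and one must then confirm universal $\mathrm{CH}_0$-triviality explicitly at these worst points. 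Granting (i) and (ii), the specialization criterion shows that $X_0$, hence the very general element of $|L|$ equipped with its nontrivial unramified cyclic degree $3$ cover, is not stably rational.
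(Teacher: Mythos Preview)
Your overall framework (specialization \`a la Voisin/Colliot-Th\'el\`ene--Pirutka) is right, but two of the three main pieces are misdirected.

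\textbf{The role of the $\mathsf{E}_6$ curve.} You treat the $\mathsf{E}_6$-singular member as a second degeneration where the unramified class is ``supported and detected'', and you worry about $\mathrm{CH}_0$-triviality over an $\mathsf{E}_6$-point. In the paper the $\mathsf{E}_6$ hypothesis plays no such role: it is used only, via a Lefschetz-pencil/vanishing-cycle argument as in \cite{HKTmoduli}, to guarantee that the monodromy of the family of smooth curves in $|L|$ acts transitively on the set of nontrivial unramified cyclic degree~$3$ covers. This is what makes ``the Brauer--Severi surface bundle corresponding to a very general element'' a well-posed statement over a single irreducible parameter space. There is exactly one special fiber in the argument, namely the bundle over $D=D_1\cup D_2$; no $\mathsf{E}_6$ local model enters the geometry of $X_0$.

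\textbf{Nonvanishing.} Because the special fiber sits over the nodal curve $D=D_1\cup D_2$ with nontrivial \'etale cyclic covers on each component, the nontriviality of its unramified Brauer group is the classical Artin--Mumford computation \cite{AM}; no Bloch--Ogus cup-product construction is needed.

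\textbf{The actual technical obstacle you are missing.} The heart of the proof is not verifying $\mathrm{CH}_0$-triviality (the degenerate fibers have toric singularities, handled by the good models of \cite{KTsurf}) but rather \emph{constructing a flat family of Brauer--Severi surface bundles whose fiber over $b_0$ is the desired $X_0$}. The Brauer class $\alpha$ lives on the smooth iterated root stack $\sqrt[3]{(S,\{D_1,D_2\})}$, but to fit into a family over $|L|$ one needs an Azumaya representative on the \emph{singular} root stack $\sqrt[3]{(S,D)}$. At each node $x\in D_1\cap D_2$ the projective $\mu_3\times\mu_3$-representation attached to $\mathcal{A}$ is either ``good'' (kernel the antidiagonal) or ``bad'' (kernel the diagonal), and descent along $\sqrt[3]{(S,\{D_1,D_2\})}\to\sqrt[3]{(S,D)}$ is possible precisely when $\mathcal{A}$ is good at every node. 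The paper introduces a nonstandard elementary transformation, along an auxiliary curve through $x$ with a $\mathsf{D}_4$-singularity in the root stack, that flips bad to good; only after this can one invoke deformation theory (as in \cite{HKTconic}) and the construction of \cite{KTsurf} to produce the family $\mathcal{X}\to B\times S$. Your proposal does not address this descent/family-construction step at all, and without it there is no family in which to specialize.
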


This is applicable, for instance, to the complete linear system
of degree $d$ curves in $\PP^2$ for $d\ge 6$.

The proof of Theorem \ref{thm.main} relies on the construction of good models of Brauer-Severi surface bundles
in \cite{KTsurf}. A new ingredient is a variant
of the standard elementary transformation of vector bundles.
This is needed to apply the specialization method, which was introduced by
Voisin \cite{voisin} and developed further
in \cite{CTP}, \cite{nicaiseshinder}, \cite{kontsevichtschinkel} and which
tells us that
in a family where one (mildly singular) member has
an obstruction to stable rationality,
the very general member fails to be stably rational.
In our case, the family is a family of Brauer-Severi surface bundles, where one member has nontrivial $3$-torsion in its unramified Brauer group.

In Section \ref{sect:basicfacts} we recall some facts on Brauer groups, and
in Section \ref{sect:elem} we describe the variant of the
standard elementary transformation that will be used in the proof
of Theorem \ref{thm.main}, which occupies Section \ref{sect:proof}.

\noindent {\bf Acknowledgments:}
We are grateful to Asher Auel for helpful comments.
The first author is partially supported by the
Swiss National Science Foundation.
The second author is partially supported by NSF grant 1601912.

\section{Basic facts}
\label{sect:basicfacts}

Recall that the Brauer group of a Noetherian scheme $S$ is defined
as the torsion subgroup of the \'etale cohomology group $H^2(S,\G_m)$ \cite{GB}.
The same definition extends to Noetherian Deligne-Mumford stacks.

In this section, we work over an algebraically closed field $k$ of characteristic different from $3$.
We start with two basic facts:

\begin{prop}[\cite{AM}]
\label{prop.Brauerexact}
Let $S$ be a smooth surface over $k$ that is \emph{(i)} projective and rational,
or \emph{(ii)} quasiprojective.
Then there are residue maps fitting in an exact sequence
\begin{align*}
0\to \Br(K)[3]\to \bigoplus_{\xi\in S^{(1)}} H^1(k(\xi)&,\Z/3\Z)
\to \bigoplus_{\xi\in S^{(2)}} \Z/3\Z&&\text{in case \emph{(i)}},\\
0\to \Br(S)[3]\to \Br(K)[3]\to&\bigoplus_{\xi\in S^{(1)}} H^1(k(\xi),\Z/3\Z)
&&\text{in case \emph{(ii)}}.
\end{align*}
Here $K=k(S)$, and $S^{(i)}$ denotes the set of codimension $i$ points of $S$.
\end{prop}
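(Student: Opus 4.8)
The plan is to derive both exact sequences from the Bloch–Ogus / Gersten-type resolution for the étale sheaf $\mu_3$ on a smooth surface, which is the standard mechanism underlying the residue maps in class field theory of surfaces. Since $k$ is algebraically closed of characteristic different from $3$, we may identify $\mu_3\cong\Z/3\Z$, and the Kummer sequence gives $H^1(K,\mu_3)\cong K^*/(K^*)^3$ and identifies $\Br(K)[3]$ with $H^2(K,\mu_3)$. For a codimension one point $\xi\in S^{(1)}$ with residue field $k(\xi)$, the residue map $H^2(K,\mu_3)\to H^1(k(\xi),\Z/3\Z)$ is the tame symbol attached to the discrete valuation $v_\xi$; and for a closed point $\eta\in S^{(2)}$ the second residue $H^1(k(\xi),\Z/3\Z)\to\Z/3\Z$ is the further tame symbol along a curve through $\eta$. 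I would set up these maps first, noting that $k(\xi)$ is a one-dimensional function field and $k(\eta)=k$ is algebraically closed so that $H^0(k(\eta),\mu_3)=\Z/3\Z$.

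The core of the argument is the arithmetic resolution (Bloch–Ogus, or the Gersten resolution for étale cohomology with finite coefficients on a smooth variety over a field, valid in this generality by Gabber's work), which produces an exact complex of Zariski sheaves whose global sections over the integral smooth surface $S$ read
\begin{equation*}
H^2(K,\mu_3)\xrightarrow{\ \partial^1\ }\bigoplus_{\xi\in S^{(1)}}H^1(k(\xi),\Z/3\Z)\xrightarrow{\ \partial^2\ }\bigoplus_{\eta\in S^{(2)}}\Z/3\Z.
\end{equation*}
For case (ii), the quasiprojective case, injectivity of $\Br(S)[3]\to\Br(K)[3]$ together with exactness of $\Br(S)[3]\to\Br(K)[3]\to\bigoplus_\xi H^1(k(\xi),\Z/3\Z)$ is precisely the statement that the unramified classes — those with trivial residue at every codimension one point — are exactly the classes extending to $S$; this is the purity theorem for the Brauer group on a smooth surface, which holds since the relevant local cohomological purity is available in all residue characteristics different from $3$. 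I would obtain this directly as the low-degree piece of the Gersten resolution, identifying $H^0$ of the resolution with the cohomology of $S$.

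For case (i), the projective rational case, the extra input is that $\Br(S)=0$ for a smooth projective rational surface over an algebraically closed field, so $\Br(S)[3]=0$ and the sequence begins with $0\to\Br(K)[3]$; this vanishing follows from $\Pic$ being finitely generated and torsion-free together with birational invariance of $\Br$ for smooth projective surfaces, or simply from $\Br(\PP^2)=0$ and birational invariance. Injectivity of $\Br(K)[3]\hookrightarrow\bigoplus_\xi H^1(k(\xi),\Z/3\Z)$ is then exactness at the $\Br(K)[3]$ spot combined with $\Br(S)[3]=0$. The one genuinely delicate point — the main obstacle — is the exactness of the complex at the middle term $\bigoplus_\xi H^1(k(\xi),\Z/3\Z)$ in case (i), i.e.\ that a collection of ramification data along curves arises as the residue of a global class in $\Br(K)[3]$ precisely when it satisfies the reciprocity (sum-to-zero at each closed point) condition encoded by $\partial^2$. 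Rather than reprove this, I would cite the stated reference \cite{AM}, observing that over an algebraically closed base the second residue takes values in $\Z/3\Z$ and that properness of $S$ makes the direct sums finite enough for the reciprocity law; the rationality hypothesis is what guarantees $\Br(S)[3]=0$ so that the projective sequence is genuinely exact on the left and the complex becomes the short exact-type sequence displayed.
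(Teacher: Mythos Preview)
The paper does not supply a proof of this proposition; it is stated with attribution to \cite{AM} and used as input. Your sketch via the Bloch--Ogus/Gersten resolution for $\mu_3$, purity for the Brauer group in codimension one, and the vanishing $\Br(S)[3]=0$ for a smooth projective rational surface is the standard route and is correct in outline; you also rightly flag that exactness at the middle term in case~(i) is the substantive reciprocity statement and defer it to the same reference the paper cites.
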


The \emph{root stack} $\sqrt[3]{(S,D)}$ along an effective Cartier divisor $D$ in $S$
is a Deligne-Mumford stack, locally, for $D$ defined by the vanishing of
a regular function $f$ on an affine chart $\Spec(A)$ of $S$, isomorphic to
the stack quotient
\[ [\Spec(A[t]/(t^3-f))/\mu_3], \]
where the roots of unity $\mu_3$ act by scalar multiplication on $t$;
cf.\ \cite[\S 2]{cadman}, \cite[App.\ B]{AGV}.
There is a closed substack with morphism to $D$ known as the
\emph{gerbe of the root stack} and given locally as
\[
[\Spec(A[t]/(t,f))/\mu_3].
\]
This is a gerbe since this $\mu_3$ acts trivially, i.e.,
\[
[\Spec(A[t]/(t,f))/\mu_3]\cong \Spec(A[t]/(t,f))\times B\mu_3,
\]
where $B\mu_3$ denotes the classifying stack of $\mu_3$.
The complement of the gerbe of the root stack maps isomorphically to $S\smallsetminus D$.

The root stack is smooth when $D$ is smooth, and singular when $D$ is singular.
For $D=D_1\cup D_2$ as in Theorem \ref{thm.main}, however, we may consider
the \emph{iterated root stack} \cite[Def.\ 2.2.4]{cadman}
\begin{equation}
\label{eqn.iteratedrootstack}
\sqrt[3]{(S,\{D_1,D_2\})}:=
\sqrt[3]{(S,D_1)}\times_S\sqrt[3]{(S,D_2)},
\end{equation}
which is smooth with stabilizer group $\mu_3$
over the smooth locus of $D$ and
$\mu_3\times \mu_3$ over $D_1\cap D_2$.
Base change by the inclusion of the gerbe of the root stack
$\sqrt[3]{(S,D_i)}$ leads to a closed substack of
$\sqrt[3]{(S,\{D_1,D_2\})}$ with morphism to the pre-image of $D_i$
in $\sqrt[3]{(S,D_{3-i})}$ which we call the
\emph{gerbe over the $i$th component}, for $i=1$, $2$:
\[ \mathfrak{D}_i\to D_i\times_S\sqrt[3]{(S,D_{3-i})}. \]

\begin{prop}[\cite{lieblicharithmeticsurface}]
\label{prop.Brauerextends}
Let $S$ be a smooth quasiprojective surface over $k$,
$D$ a curve on $S$ that is either \emph{(i)} smooth or
\emph{(ii)} nodal, consisting of two intersecting smooth components, and 
$U:=S\smallsetminus D$. Then the restriction map
induces an isomorphism
\begin{align*}
\Br\big(\sqrt[3]{(S,D)}\big)[3] \to \Br(&U)[3]
&&\text{in case \emph{(i)}},\\
\Br\big(\sqrt[3]{(S,\{D_1,D_2\})}\big)[3]\to &\Br(U)[3]
&&\text{in case \emph{(ii)}}.
\end{align*}
In each case, nonzero elements of the indicated Brauer groups are
represented by sheaves of Azumaya algebras of index $3$.
\end{prop}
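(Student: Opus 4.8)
The plan is to realize both $3$-torsion groups as subgroups of $\Br(K)[3]$, $K=k(S)$, and to prove they coincide. Since the complement of the gerbe maps isomorphically to $U=S\smallsetminus D$, the root stack has trivial generic stabilizer, its generic point being $\Spec K$; as it is moreover smooth, its Brauer group injects into $\Br(K)$. The same holds for $U$ via the case-(ii) sequence of Proposition \ref{prop.Brauerexact}, whose image is exactly the classes unramified at every codimension-$1$ point of $U$. Because restriction $\Br(\sqrt[3]{(S,D)})[3]\to\Br(U)[3]$ is compatible with these injections into $\Br(K)[3]$, it is injective, and the inclusion $\Br(\sqrt[3]{(S,D)})[3]\subseteq\Br(U)[3]$ is immediate: a class extending over the whole stack restricts to one extending over $U$.

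The heart of the matter is the reverse inclusion, that every $\alpha\in\Br(U)[3]$ extends across the gerbe; I would verify this at the generic point of $D$ (in case (ii), separately at each $D_i$), where the stack is \'etale-locally a single root stack. Completing $\cO_{S,D}$ gives $k(D)[[x]]$, and adjoining $t$ with $t^3=x$ turns the local root stack into $\mathcal{Y}=[\Spec k(D)[[t]]/\mu_3]$, with $\mu_3$ acting by $t\mapsto\zeta t$ and open part $\Spec K_D$, $K_D=k(D)((x))$. The Hochschild--Serre spectral sequence for the $\mu_3$-cover $\Spec k(D)[[t]]\to\mathcal{Y}$ computes $\Br(\mathcal{Y})$: the cover is a complete regular local ring with $\Pic=0$ and, since $k(D)$ is $C_1$ by Tsen, trivial Brauer group, so the only contribution in total degree $2$ is $H^2(\mu_3,k(D)[[t]]^\times)$. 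Writing $k(D)[[t]]^\times=k(D)^\times\times(1+t\,k(D)[[t]])$ and using that the second factor is uniquely $3$-divisible (as $\chara k\neq3$), hence cohomologically trivial for $\mu_3$, yields $\Br(\mathcal{Y})[3]\cong H^2(\mu_3,k(D)^\times)\cong k(D)^\times/(k(D)^\times)^3$. By Kummer theory this matches $\Br(K_D)[3]\cong H^1(k(D),\Z/3\Z)$, and comparing with the residue presentation shows restriction $\Br(\mathcal{Y})[3]\to\Br(K_D)[3]$ is an isomorphism. Thus the stacky residue obstruction along the gerbe vanishes identically, and $\alpha$ extends over $\mathcal{Y}$.

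It remains to globalize. On overlaps the local extensions and $\alpha$ all restrict to the same class in $\Br(K)[3]$, so by the injectivity above they agree, producing a class defined away from the codimension-$2$ points of $\sqrt[3]{(S,D)}$ (in case (ii), the nodes $D_1\cap D_2$, where the stabilizer is $\mu_3\times\mu_3$). Purity for the Brauer group of the smooth Deligne--Mumford surface --- the stacky analogue of the sequences in Proposition \ref{prop.Brauerexact}, obtained \'etale-locally from the scheme case --- then extends it across those points, giving a preimage in $\Br(\sqrt[3]{(S,D)})[3]$ and the asserted isomorphism. The iterated case $\sqrt[3]{(S,\{D_1,D_2\})}$ follows the same template, applying the local computation of the preceding paragraph along $D_1$ and along $D_2$.

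For the final assertion, a nonzero $\beta$ restricts to a nonzero $\alpha\in\Br(K)[3]$; as $K$ is the function field of a surface over the algebraically closed $k$, de Jong's period--index theorem gives $\mathrm{ind}(\alpha)=3$, so $\alpha$ is a degree-$3$ division algebra over $K$. Since $\beta$ is unramified on the smooth surface $\sqrt[3]{(S,D)}$, a maximal order in this algebra is reflexive and, over the regular two-dimensional stacky base, Azumaya of degree $3$, representing $\beta$. I expect the main obstacle to be the local computation of $\Br(\mathcal{Y})$ --- specifically, the edge-map bookkeeping in Hochschild--Serre needed to see that the $\mu_3$-gerbe structure absorbs precisely the residue $H^1(k(D),\Z/3\Z)$, so that restriction to $\Spec K_D$ is an isomorphism and not merely an injection; everything else is formal once this is in hand.
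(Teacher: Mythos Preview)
The paper does not supply its own proof of this proposition: it is stated with attribution to \cite{lieblicharithmeticsurface} and used as a black box. So there is no argument in the paper to compare against, and your write-up is in effect a proposed proof of the cited result rather than a reconstruction of something the authors wrote.

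That said, your outline is the right one and matches the strategy behind Lieblich's result. Reducing to $\Br(K)[3]$ via smoothness of the root stack, computing the local root stack $\mathcal{Y}=[\Spec k(D)[[t]]/\mu_3]$ by Hochschild--Serre using Tsen and unique $3$-divisibility of principal units, and then invoking purity across the $\mu_3\times\mu_3$-points is exactly how one expects the argument to go. Your own caveat is well placed: the only genuinely nonformal step is checking that the restriction $\Br(\mathcal{Y})\to\Br(K_D)[3]$ realizes the identity on $k(D)^\times/(k(D)^\times)^3$, not merely that source and target are abstractly isomorphic. One clean way to see this is to note that the cyclic algebra $(a,x)_\zeta$ over $K_D$ pulls back along $\Spec k(D)[[t]]\to\mathcal{Y}$ to a matrix algebra (since $x=t^3$), so the class lies in the image of the edge map from $E_2^{2,0}$, and the cocycle one reads off is $a$; this pins down the compatibility you need.

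For the final assertion your maximal-order argument is correct but deserves one more sentence of justification. A maximal $\cO$-order $\Lambda$ in the degree-$3$ division algebra is reflexive, hence locally free on the regular two-dimensional stack; at each codimension-$1$ point it is a maximal order over a DVR in an unramified class, hence Azumaya there; and then the canonical map $\Lambda\otimes\Lambda^{\mathrm{op}}\to\End(\Lambda)$ is a map of locally free sheaves of equal rank whose determinant is a unit in codimension $1$, hence a unit everywhere by normality. That closes the gap between ``reflexive order'' and ``Azumaya of degree $3$'' without appealing to the twisted-sheaf machinery that Lieblich's paper would use.
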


In case (ii) of Proposition \ref{prop.Brauerextends}, we have a morphism
\begin{equation}
\label{eqn.rho}
\rho\colon \sqrt[3]{(S,\{D_1,D_2\})}\to \sqrt[3]{(S,D)}.
\end{equation}
Let $\alpha\in \Br\big(\sqrt[3]{(S,\{D_1,D_2\})}\big)$ be the
class of a sheaf of Azumaya algebras $\mathcal{A}$ of index $3$.

\begin{assu}
\label{assumption1}
The restriction of $\alpha$ to $\Br(U)$ does not
extend across the generic point of $D_1$ or of $D_2$ in $S$.
\end{assu}

\begin{lemm}
\label{lem.easyelementarytransformation}
With notation as above, let $x\in D_1\cap D_2$ and let
\begin{equation}
\label{eqn.projectiverep}
\mu_3\times \mu_3\to PGL_3
\end{equation}
be the projective representation associated with the restriction of
$\mathcal{A}$ to the copy of the classifying stack $B(\mu_3\times\mu_3)$
in $\sqrt[3]{(S,\{D_1,D_2\})}$ over $x$, where the factors $\mu_3$
correspond to the stabilizer along $D_1$ and along $D_2$.
Then the restriction of \eqref{eqn.projectiverep} to each factor $\mu_3$
is balanced, i.e., is isomorphic to the projectivization of the sum of the three
distinct one-dimensional linear representations of $\mu_3$.
\end{lemm}

\begin{proof}
It suffices to treat just the first factor $\mu_3$.
With the fiber product description
\eqref{eqn.iteratedrootstack} of the iterated root stack we have the
projection morphism
\[ p_2\colon \sqrt[3]{(S,\{D_1,D_2\})}\to \sqrt[3]{(S,D_2)}. \]
There is a criterion due to Alper
\cite[Thm.\ 10.3]{alpergood} for a vector bundle (e.g., the
sheaf of Azumaya algebras $\mathcal{A}$) to descend via a morphism such as $p_2$.
Specifically, Alper considers so-called good moduli spaces, e.g., the
coarse moduli space of a finite-type separated Deligne-Mumford stack over $k$
whose stabilizer groups have order not divisible by the characteristic of $k$.
However, by reasoning \'etale locally, his criterion applies as well
to a relative moduli space as in \cite[\S 3]{aovtwisted}.
Applied to $p_2$, this reveals that there exists a sheaf of Azumaya algebras
$\mathcal{A}'$ on $\sqrt[3]{(S,D_2)}$
and an isomorphism $p_2^*\mathcal{A}'\cong \mathcal{A}$
if and only if the relative stabilizer of $p_2$ acts trivially on
fibers of $\mathcal{A}$.

Now, and several times further below, we use the Kummer sequence
\[ 0\to \mu_3\to \G_m\to \G_m\to 0 \]
and the corresponding long exact sequence of cohomology groups.
We take 
\[
\alpha_0\in H^2(\sqrt[3]{(S,\{D_1,D_2\})},\mu_3)
\]
to be a lift of the
class
\[ \alpha\in \Br\big(\sqrt[3]{(S,\{D_1,D_2\})}\big)[3]. \]
To $\alpha_0$ there is a corresponding gerbe
\[ \mathfrak{G}\stackrel{\tau}\to \sqrt[3]{(S,\{D_1,D_2\})} \]
banded by $\mu_3$, meaning that $\mathfrak{G}$ is \'etale locally
over $\sqrt[3]{(S,\{D_1,D_2\})}$
isomorphic to a product with $B\mu_3$, and the
automorphism groups of the local sections are
equipped with compatible identifications with $\mu_3$.
We have $\tau^*\alpha=0$, hence
\[ \tau^*\mathcal{A}\cong \End(\cE) \]
for some rank $3$ vector bundle $\cE$ on $\mathfrak{G}$.
The stabilizer group of $\mathfrak{G}$ is a central $\mu_3$-extension $G$ of
$\mu_3\times \mu_3$:
\begin{equation}
\label{eqn.mu3extension}
1\to \mu_3\to G\to \mu_3\times \mu_3\to 1,
\end{equation}
and by convention we take $\cE$ so that the action of the
central $\mu_3$ is by scalar multplication.

The projective representation of the first factor $\mu_3$ is induced by the
linear representation of the subgroup of $G$, pre-image in \eqref{eqn.mu3extension}
of $\mu_3\times \{1\}$ in $\mu_3\times \mu_3$.
We suppose that this is not balanced.
If this is trivial then the criterion mentioned above is applicable, and
$\mathcal{A}\cong p_2^*\mathcal{A}'$ for some sheaf of
Azumaya algebras $\mathcal{A}'$ on $\sqrt[3]{(S,D_2)}$.
But then the restriction of $\alpha$ to $\Br(U)$ extends across the
generic point of $D_1$, in contradiction to our assumption.
A nontrivial unbalanced representation is the projectivization of
a linear representation which is a sum of two copies of one
and one copy of another one-dimensional linear representation of $\mu_3$.
Then the restriction of $\mathcal{E}$ to
\[ \mathfrak{G}\times_{\sqrt[3]{(S,\{D_1,D_2\})}}\mathfrak{D}_1 \]
splits canonically according to multiplicity as $\mathcal{E}_1\oplus \mathcal{E}_2$.
Let us denote by $h$ the inclusion in $\mathfrak{G}$ of the above fiber product.
Then we may form an exact sequence
\begin{equation}
\label{eqn.easyelementarytransformation}
0\to \widetilde{\mathcal{E}}^{(j)}\to \mathcal{E}\to h_*\mathcal{E}_j \to 0
\end{equation}
for $j=1$, $2$, and consider the respective corresponding
sheaf of Azumaya algebras $\widetilde{\mathcal{A}}^{(j)}$ on
$\sqrt[3]{(S,\{D_1,D_2\})}$.
Reasoning \'etale locally, we see that for appropriate $j$ the
sheaf of Azumaya algebras $\widetilde{\mathcal{A}}^{(j)}$ descends to
$\sqrt[3]{(S,D_2)}$, and we have again reached a contradiction to our assumption.
\end{proof}

\begin{assu}
\label{assumption2}
The restriction of $\alpha$ to $\Br(K)$
(where $K=k(S)$) is an element whose residue (image under the map to
$H^1(k(\xi),\Z/3\Z)$ in Proposition \ref{prop.Brauerexact})
at the generic point of $D_i$ is the class of an
\emph{unramified} cyclic degree $3$ cover $\widetilde{D}_i\to D_i$
for $i=1$, $2$.
\end{assu}

We are interested in knowing whether $\mathcal{A}$ descends to
$\sqrt[3]{(S,D)}$, i.e., is isomorphic to $\rho^*\mathcal{A}'$ for some
sheaf of Azumaya algebras $\mathcal{A}'$ on $\sqrt[3]{(S,D)}$.

\begin{lemm}
\label{lem.etalelocalkill}
With notation and assumption as above, let $x\in D_1\cap D_2$.
Then there exists an \'etale neighborhood $S'\to S$ of $x$ such that
$\alpha$ lies in the kernel of
\[ \Br(U)\to \Br(S'\times_SU). \]
\end{lemm}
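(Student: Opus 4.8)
The plan is to exhibit the desired étale neighborhood in two steps. First I would shrink $S$ near $x$ so that the two residue covers of $\alpha$ split; this makes $\alpha$ unramified along $D_1$ and $D_2$, hence extendable across them to a class on the whole surface. Then I would shrink once more to annihilate that surface class, using that the Brauer group of a strictly Henselian local ring is trivial. Concretely, since $x$ is a node of $D$, I first choose an affine étale neighborhood $\phi\colon S'\to S$ of $x$ on which $D_i':=\phi^{-1}(D_i)$ is cut out by a regular function $t_i$, with $D_1'$ and $D_2'$ smooth and meeting transversally at the single point $x'$ over $x$. Writing $U'=S'\times_S U$ and $K'=k(S')$, Proposition \ref{prop.Brauerexact}(ii) applied to the smooth quasiprojective $U'$ embeds $\alpha|_{U'}\in\Br(U')[3]$ into $\Br(K')[3]$, and by functoriality of residues the residue of $\alpha|_{K'}$ along $D_i'$ is the image in $H^1(k(D_i'),\Z/3\Z)$ of the class $[\widetilde D_i]$ of the residue cover of $\alpha$ along $D_i$.

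Next I would invoke Assumption \ref{assumption2}. Because $\widetilde D_i\to D_i$ is \emph{unramified}, its class lies in the image of $H^1(D_i,\Z/3\Z)$; hence its pullback to $D_i'$ lies in the image of $H^1(D_i',\Z/3\Z)$ and corresponds to an étale $\mu_3$-torsor over the smooth curve $D_i'$. Over the strictly Henselian local ring $\cO^{\mathrm{sh}}_{D_i',x'}$ such a torsor is trivial, so by the usual colimit description of the strict Henselization the torsor already becomes trivial after replacing $S'$ by a smaller étale neighborhood of $x$. After this shrinking both residues of $\alpha|_{K'}$---along $D_1'$ and along $D_2'$---vanish; and since $\alpha|_{U'}$ is already unramified at every codimension-one point of $U'$, the class $\alpha|_{K'}$ is now unramified at every codimension-one point of $S'$. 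Exactness in Proposition \ref{prop.Brauerexact}(ii) for $S'$ then produces a unique lift $\widetilde\alpha\in\Br(S')[3]$, and the injectivity $\Br(U')[3]\hookrightarrow\Br(K')[3]$ gives $\widetilde\alpha|_{U'}=\alpha|_{U'}$.

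Finally I would kill $\widetilde\alpha$. Since $\cO^{\mathrm{sh}}_{S,x}$ is strictly Henselian its Brauer group vanishes, and as étale cohomology commutes with the cofiltered limit defining the strict Henselization one has $\Br(\cO^{\mathrm{sh}}_{S,x})=\varinjlim \Br(S'')$ over étale neighborhoods $S''\to S'$ of $x$. Therefore $\widetilde\alpha$ maps to $0$ in $\Br(S'')[3]$ for some such $S''$, whence $\alpha|_{S''\times_S U}=\widetilde\alpha|_{S''\times_S U}=0$; this is exactly the assertion for the étale neighborhood $S''\to S$.

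The substance of the argument is the translation, via Assumption \ref{assumption2}, of the global \emph{non}-extendability of $\alpha$ (Assumption \ref{assumption1}) into étale-local \emph{triviality} of its residues at $x$: unramifiedness of the covers $\widetilde D_i$ is precisely what makes the residue classes die on a strictly Henselian disk, and this is consistent with the balancing established in Lemma \ref{lem.easyelementarytransformation}. The main obstacle I anticipate is bookkeeping rather than conceptual---coordinating the two independent étale-local vanishing statements (splitting of the residue covers and triviality of the surface Brauer class) so that both are realized over one genuine finite-type étale neighborhood, which in each case requires descending from the strict Henselization to a finite stage by a colimit argument and a correct application of purity (the residue sequence) for the regular surface.
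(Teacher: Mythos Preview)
Your argument is correct and follows essentially the same route as the paper's proof: first trivialize the residue covers $\widetilde D_i\to D_i$ over an \'etale neighborhood of $x$ (using Assumption~\ref{assumption2}), then invoke Proposition~\ref{prop.Brauerexact}(ii) to extend the class to $\Br(S')$, and finally shrink again to kill this surface class. Your write-up simply makes explicit the colimit and purity bookkeeping that the paper leaves implicit.
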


\begin{proof}
We take $S'\to S$ trivializing the cyclic covers $\widetilde{D}_i\to D_i$
for $i=1$, $2$.
Application of Proposition \ref{prop.Brauerexact} to $S'$ shows that
the pullback of $\alpha$ to $\Br(S'\times_SU)$ is the restriction of an
element of $\Br(S')$.
This is trivialized upon passage to a suitable further \'etale neighborhood.
\end{proof}

\begin{prop}
\label{prop.kernel}
With notation and assumption as above, let $x\in D_1\cap D_2$.
Then the kernel of the projective representation
\eqref{eqn.projectiverep} is a subgroup, isomorphic to $\mu_3$,
embedded either as the diagonal or the antidiagonal in $\mu_3\times\mu_3$.
\end{prop}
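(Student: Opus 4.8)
The plan is to lift the projective representation \eqref{eqn.projectiverep} to honest matrices, normalize it using the balancedness from Lemma \ref{lem.easyelementarytransformation}, and then read off the kernel once I show that the two factors commute genuinely rather than only projectively. Concretely, let $\sigma$ and $\tau$ generate the first and second factors of $\mu_3\times\mu_3$, and choose lifts $A,B\in GL_3$ of the images of $\sigma,\tau$ under \eqref{eqn.projectiverep} (equivalently, the actions of lifts $\tilde\sigma,\tilde\tau\in G$ on the fibre $\cE_x\cong k^3$, where $G$ is the stabilizer in \eqref{eqn.mu3extension}). By Lemma \ref{lem.easyelementarytransformation} both $A$ and $B$ are balanced, so after conjugating and rescaling I may take $A=\mathrm{diag}(1,\omega,\omega^2)$ for a primitive cube root of unity $\omega\in k$, and choose $B$ with eigenvalues exactly $1,\omega,\omega^2$, so that $A^3=B^3=I$. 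Since $\sigma$ and $\tau$ commute, $ABA^{-1}B^{-1}=cI$ for a well-defined $c\in\mu_3$ (independent of the rescalings), and I record at once that $c$ governs everything: if $c\ne1$ then $A,B$ generate the Heisenberg group in $GL_3$, whose image in $PGL_3$ is a faithful copy of $\mu_3\times\mu_3$ and whose kernel is trivial. Thus the asserted conclusion is \emph{equivalent} to $c=1$.

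The main step is therefore to exclude $c\ne1$, and this is exactly the role of Lemma \ref{lem.etalelocalkill}. The scalar $c$ is the image under the central character of \eqref{eqn.mu3extension} of the commutator pairing, equivalently the Schur obstruction to lifting \eqref{eqn.projectiverep} to a linear representation of $\mu_3\times\mu_3$, which lives in $H^2(\mu_3\times\mu_3,\G_m)\cong\Z/3\Z$. Under the dictionary between the stabilizer representation at the node $x$ and the local behaviour of $\alpha$, this value equals the class of the restriction of $\alpha$ to a punctured \'etale neighbourhood $U_{\mathrm{loc}}$ of $x$ in $U$, an element of $\Br(U_{\mathrm{loc}})[3]\cong\Z/3\Z$ generated by the cup product of the Kummer classes of local equations of $D_1$ and $D_2$. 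Now Lemma \ref{lem.etalelocalkill} produces an \'etale neighbourhood $S'\to S$ of $x$ on which $\alpha$ dies on $S'\times_SU$; since $S'\to S$ induces an isomorphism on strict henselizations at $x$, it leaves this local invariant unchanged, forcing it, and hence $c$, to vanish. Establishing this identification of $c$ with the local cup product invariant is the part of the argument that requires genuine care, and it is where the hypotheses (via Assumption \ref{assumption2} and Lemma \ref{lem.etalelocalkill}) actually enter; the rest is formal.

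Granting $c=1$, the remainder is pure linear algebra over $\mu_3$. Now $A$ and $B$ commute genuinely; since $A=\mathrm{diag}(1,\omega,\omega^2)$ has distinct eigenvalues, $B$ is diagonal, and being balanced its diagonal entries form a scalar multiple of a permutation of $(1,\omega,\omega^2)$. Running through the six permutations up to scalar shows that the only diagonal balanced classes in $PGL_3$ are $A$ and $A^2$, so projectively $B$ equals $A$ or $A^2$. Consequently the image of \eqref{eqn.projectiverep} is the cyclic group $\langle A\rangle\cong\mu_3$, and the kernel is $\{(\sigma^i,\tau^j):i+j\equiv0\}$, the antidiagonal, when $B=A$, and $\{(\sigma^i,\tau^j):i\equiv j\}$, the diagonal, when $B=A^2$. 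In either case the kernel is a copy of $\mu_3$ embedded as the diagonal or the antidiagonal in $\mu_3\times\mu_3$, which is the claim.
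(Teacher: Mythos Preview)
Your proof is correct and follows essentially the same strategy as the paper: use Lemma~\ref{lem.etalelocalkill} to see that the projective representation lifts to an honest linear representation of $\mu_3\times\mu_3$, and then combine this with the balancedness from Lemma~\ref{lem.easyelementarytransformation} to pin down the kernel. The paper phrases this more directly: once the pullback of $\alpha$ to $S'\times_S\sqrt[3]{(S,\{D_1,D_2\})}$ vanishes, the lift exists and may be written as $\text{trivial}\oplus\chi\oplus\chi'$ for characters $\chi,\chi'$ of $\mu_3\times\mu_3$, whereupon balancedness forces $\chi,\chi'$ to restrict to opposite nontrivial characters on each factor, and a short case check finishes. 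Your route through the commutator scalar $c$, the Schur obstruction, and the identification with a cup-product class in $\Br(U_{\mathrm{loc}})$ is equivalent but adds a layer you do not really need (and you yourself flag it as ``requiring genuine care''); the vanishing of $\alpha$ on the \'etale neighbourhood already gives the linear lift without passing through any local Brauer computation.
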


\begin{proof}
By Lemma \ref{lem.etalelocalkill}, with its notation,
the pullback of $\alpha$ to
\[ S'\times_S\sqrt[3]{(S,\{D_1,D_2\})} \]
vanishes, and hence the projective representation lifts to a
linear representation, which is well-defined up to twist by a
character of $\mu_3\times \mu_3$ and hence may be written as
$\text{trivial}\oplus \chi\oplus \chi'$, for some characters $\chi$ and $\chi'$
of $\mu_3\times \mu_3$.
By Lemma \ref{lem.easyelementarytransformation}, the restriction of
$\chi$ and $\chi'$ to the first factor $\mu_3$ are nontrivial and opposite,
and the same holds for the restrictions to the second factor $\mu_3$.

Let $\chi_i$ for $i\in\{0,1,2\}$ denote the $i$th character of $\mu_3$.
Swapping $\chi$ and $\chi'$ if necessary, we may suppose that
\[
\chi|_{\mu_3\times\{1\}}=\chi_1\quad \text{and}\quad 
\chi'|_{\mu_3\times\{1\}}=\chi_2.
\]
Now there are two possibilities.
If 
\[
\chi|_{\{1\}\times\mu_3}=\chi_1\quad \text{and} \quad \chi'|_{\{1\}\times\mu_3}=\chi_2,
\]
then the kernel is the antidiagonal copy of $\mu_3$.
If 
\[
\chi|_{\{1\}\times\mu_3}=\chi_2\quad \text{and} \quad \chi'|_{\{1\}\times\mu_3}=\chi_1,
\]
then the kernel is the diagonal copy of $\mu_3$.
\end{proof}

\begin{defi}
\label{defn.goodbad}
In the two cases in the proof of Proposition \ref{prop.kernel}, leading to antidiagonal $\mu_3$ and
diagonal $\mu_3$, we say that the
sheaf of Azumaya algebras $\mathcal{A}$ at $x$ is \emph{good},
respectively \emph{bad}.
\end{defi}

\begin{prop}
\label{prop.goodbad}
With notation and assumption as above, 
the sheaf of Azumaya algebras $\mathcal{A}$ descends to
$\sqrt[3]{(S,D)}$ if and only if $\mathcal{A}$ is good at every point of
$D_1\cap D_2$.
\end{prop}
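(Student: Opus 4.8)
The plan is to exploit the fact that the morphism $\rho$ of \eqref{eqn.rho} is an isomorphism away from the finitely many points of $D_1\cap D_2$, so that descent of $\mathcal{A}$ along $\rho$ is governed entirely by what happens over those points, and there to apply the same descent criterion used in the proof of Lemma \ref{lem.easyelementarytransformation}.

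First I would make $\rho$ explicit near a point $x\in D_1\cap D_2$. Choosing local coordinates $u,v$ on $S$ with $D_1=\{u=0\}$ and $D_2=\{v=0\}$, the iterated root stack \eqref{eqn.iteratedrootstack} is locally $[\Spec k[s,t]/(\mu_3\times\mu_3)]$ with $s^3=u$, $t^3=v$ and the two factors scaling $s$ and $t$, while $\sqrt[3]{(S,D)}$ is locally $[\Spec(k[u,v,w]/(w^3-uv))/\mu_3]$ with $\mu_3$ scaling $w$. In these terms $\rho$ is induced by $w\mapsto st$ together with the multiplication homomorphism $\mu_3\times\mu_3\to\mu_3$, $(\zeta_1,\zeta_2)\mapsto\zeta_1\zeta_2$, whose kernel is the antidiagonal copy of $\mu_3$. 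Over $S\smallsetminus(D_1\cap D_2)$ the relative stabilizer of $\rho$ is trivial and $\rho$ is an isomorphism (over $S\smallsetminus D_2$ both stacks equal $\sqrt[3]{(S,D_1)}$, and symmetrically over $S\smallsetminus D_1$); at each $x\in D_1\cap D_2$ the relative stabilizer is the antidiagonal $\mu_3$, and over the residual gerbe above $x$ the morphism $\rho$ is the $\mu_3$-gerbe $B(\mu_3\times\mu_3)\to B\mu_3$ banded by that antidiagonal.

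Next, exactly as in the proof of Lemma \ref{lem.easyelementarytransformation}, I would invoke Alper's descent criterion \cite[Thm.\ 10.3]{alpergood}, in the relative form of \cite[\S 3]{aovtwisted}, applied to $\rho$: there exists a sheaf of Azumaya algebras $\mathcal{A}'$ on $\sqrt[3]{(S,D)}$ with $\rho^*\mathcal{A}'\cong\mathcal{A}$ if and only if the relative stabilizer of $\rho$ acts trivially on the fibers of $\mathcal{A}$. Since this relative stabilizer is trivial away from $D_1\cap D_2$, the condition reduces to requiring that the antidiagonal $\mu_3$ act trivially on the fibers of $\mathcal{A}$ at every $x\in D_1\cap D_2$. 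To interpret the latter, at a fixed $x$ I would write $\mathcal{A}=\End(\cE)$ as in Lemma \ref{lem.easyelementarytransformation}, so that $\mu_3\times\mu_3$ acts on $\mathcal{A}$ through the projective representation \eqref{eqn.projectiverep} and the kernel of this action is exactly the kernel of \eqref{eqn.projectiverep}. By Proposition \ref{prop.kernel} that kernel is a single copy of $\mu_3$, either the antidiagonal or the diagonal; the antidiagonal acts trivially on $\mathcal{A}$ precisely when it is contained in, hence by order considerations equal to, this kernel, that is, precisely in the good case of Definition \ref{defn.goodbad}. Combining the two directions yields the proposition.

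The step I expect to require the most care is the application of Alper's criterion to $\rho$: its target $\sqrt[3]{(S,D)}$ is a \emph{singular} Deligne--Mumford stack, stacky along the node, so one cannot quote the good-moduli-space statement verbatim but must, reasoning \'etale-locally over $\sqrt[3]{(S,D)}$ as in Lemma \ref{lem.easyelementarytransformation}, verify that $\rho$ is of the relative type to which the criterion applies and that the resulting local descended algebras patch to a single global $\mathcal{A}'$.
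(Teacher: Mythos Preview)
Your proposal is correct and follows essentially the same route as the paper: both give the local description of $\rho$ near $x\in D_1\cap D_2$, identify the relative stabilizer as the antidiagonal $\tilde\mu_3\subset\mu_3\times\mu_3$, and appeal to the Alper-type descent criterion (the paper phrasing this as $\rho$ being a relative coarse moduli space and passing to an \'etale chart of $\sqrt[3]{(S,D)}$, exactly as you anticipate in your final paragraph). Your version is more explicit in invoking Proposition~\ref{prop.kernel} to pass from ``antidiagonal acts trivially'' to ``good'', which the paper leaves implicit from Definition~\ref{defn.goodbad}.
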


\begin{proof}
The morphism $\rho$ in \eqref{eqn.rho} is a relative coarse moduli space.
Indeed, if near $x\in D_1\cap D_2$ in $S$ we denote a defining equation of
$D_i$ by $f_i$ for $i=1$, $2$, then $\rho$ has the local form
\[ [\Spec(A[t_1,t_2]/(t_1^3-f_1,t_2^3-f_2))/\mu_3\times\mu_3]\to
[\Spec(A[t]/(t^3-f_1f_2))/\mu_3] \]
where $t=t_1t_2$ and $\mu_3\times \mu_3$ maps to $\mu_3$ by multiplication.
Letting $\tilde{\mu}_3$ denote the antidiagonal copy of $\mu_3$ in
$\mu_3\times\mu_3$, we obtain
\[ [\Spec(A[t_1,t_2]/(t_1^3-f_1,t_2^3-f_2))/\tilde{\mu}_3]\to
\Spec(A[t]/(t^3-f_1f_2)) \]
upon base change to an \'etale chart of $\sqrt[3]{(S,D)}$.
Triviality of the action of $\tilde{\mu}_3$ is thus necessary and
sufficient for the descent of $\mathcal{A}$ to
$\sqrt[3]{(S,D)}$.
\end{proof}

\section{Elementary transformation}
\label{sect:elem}
Already the proof of Lemma \ref{lem.easyelementarytransformation}
exhibits the use of an elementary transformation \eqref{eqn.easyelementarytransformation}
to alter the representation type of fibers of a vector bundle.
In this section we use a variant of this to change the type of a sheaf of
Azumaya algebras at a point from bad to good
(Definition \ref{defn.goodbad}).

As in the previous section, 
$S$ is a smooth quasiprojective surface over an
algebraically closed field $k$ of characteristic different from $3$, and $D=D_1\cup D_2$ is
a nodal divisor with intersecting irreducible smooth
components $D_1$ and $D_2$. We are given
nontrivial unramified cyclic degree $3$ covers
\[
\widetilde{D}_i\to D_i,\quad \text{for} \quad i=1, 2,
\]
and an element
\[
\alpha\in \Br\big(\sqrt[3]{(S,\{D_1,D_2\})}\big)[3],
\]
whose residue along $D_i$ is the class of $\widetilde{D}_i\to D_i$,
for $i=1$, $2$.
Let $\mathcal{A}$ be a sheaf of Azumaya algebras of index $3$
on $\sqrt[3]{(S,\{D_1,D_2\})}$ representing $\alpha$.
At a point $x\in D_1\cap D_2$, the sheaf of Azumaya algebras $\mathcal{A}$
has a type, good or bad, according to the type of the associated
projective representation at the point of $\sqrt[3]{(S,\{D_1,D_2\})}$ with
stabilizer $\mu_3\times \mu_3$ over $x$.

Let $C_0$ be a general nonsingular curve in $S$ through $x$.
Specifically, we suppose that $C_0$ meets $D_i$ transversely, for
$i=1$, $2$, and does not pass through any point of $D_1\cap D_2$ besides $x$.
The pre-image $C$ of $C_0$ in $\sqrt[3]{(S,\{D_1,D_2\})}$ has a
$\mathsf{D}_4$-singularity over $x$.

\begin{lemm}
\label{lem.BrCprime}
With the above notation, $\alpha$ restricts to zero in $\Br(C)$.
\end{lemm}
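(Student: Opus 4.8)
The plan is to study the stacky curve $C$ away from and near its singular point and then glue. Write $\tilde x$ for the point of $C$ over $x$ carrying the $\mathsf{D}_4$-singularity, and set $C^\circ:=C\smallsetminus\{\tilde x\}$. Away from $\tilde x$ the curve $C$ is a smooth integral Deligne--Mumford stack, whose only stacky points are the $\mu_3$-orbifold points over the transverse intersections of $C_0$ with $D_1$ and $D_2$; its generic point is $\Spec k(C_0)$. By Tsen's theorem $\Br(k(C_0))=0$, and since the Brauer group of a regular integral Deligne--Mumford stack injects into the Brauer group of its generic point, I conclude $\Br(C^\circ)=0$. In particular $\alpha$ restricts to zero on $C^\circ$, so there is a rank $3$ vector bundle $\mathcal{F}^\circ$ on $C^\circ$ with $\mathcal{A}|_{C^\circ}\cong\End(\mathcal{F}^\circ)$.

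Next I would treat a neighborhood of $\tilde x$. Applying Lemma \ref{lem.etalelocalkill} at $x$ produces an \'etale neighborhood $S'\to S$ of $x$ on which $\alpha$ restricts to zero. Pulling back along $C_0\to S$ yields an \'etale neighborhood $W\to C$ of $\tilde x$ on which $\alpha|_C$ vanishes, and hence a rank $3$ bundle $\mathcal{F}^W$ with $\mathcal{A}|_W\cong\End(\mathcal{F}^W)$. Note that, although the atlas of $C$ near $\tilde x$ has the local form $[\Spec(A[t_1,t_2]/(t_1^3-f_1,t_2^3-f_2))/\mu_3\times\mu_3]$ restricted to $C_0$ and is genuinely singular, the punctured neighborhood $W\smallsetminus\{\tilde x\}$ is connected, since the antidiagonal $\mu_3$ permutes the three branches transitively.

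It then remains to glue the two splittings across $\tilde x$. On the overlap $W\smallsetminus\{\tilde x\}\subset C^\circ$ the bundles $\mathcal{F}^\circ$ and $\mathcal{F}^W$ split the same sheaf of Azumaya algebras, hence differ by a line bundle: $\mathcal{F}^\circ\cong\mathcal{F}^W\otimes\mathcal{L}$ there. Because $C^\circ$ is a \emph{smooth} stacky curve, every line bundle on its open substack extends, so the restriction $\Pic(C^\circ)\twoheadrightarrow\Pic(W\smallsetminus\{\tilde x\})$ is surjective and $\mathcal{L}$ lifts to $C^\circ$. Twisting $\mathcal{F}^\circ$ by the inverse of this lift makes the two splittings agree on the overlap, so that, reasoning \'etale-locally, they glue to a global rank $3$ bundle $\mathcal{F}$ on $C$ with $\mathcal{A}|_C\cong\End(\mathcal{F})$, whence $\alpha|_C=0$.

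The main obstacle I anticipate is carrying out the gluing rigorously across the $\mathsf{D}_4$-point: one must reconcile the splitting on the smooth locus $C^\circ$ with the \'etale-local splitting furnished by Lemma \ref{lem.etalelocalkill}, and verify the descent datum for the global bundle through the singular point. The key device that avoids extending line bundles through the singularity itself is the surjectivity of $\Pic(C^\circ)\to\Pic(W\smallsetminus\{\tilde x\})$, which extends the comparison line bundle along the smooth side $C^\circ$ rather than across $\tilde x$; the secondary technical point to justify carefully is the injectivity of the Brauer group of the smooth stacky curve $C^\circ$ into that of its generic point.
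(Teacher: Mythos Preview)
Your strategy---trivialize $\alpha$ on $C^\circ$ via Tsen, trivialize it \'etale-locally at $\tilde x$ via Lemma~\ref{lem.etalelocalkill}, and glue the two splitting bundles---is natural, but the gluing step has a genuine gap. The neighborhood $W$ produced by Lemma~\ref{lem.etalelocalkill} is only an \'etale neighborhood of $\tilde x$ in $C$, not a Zariski open substack. Consequently $W\smallsetminus\{\tilde x\}$ is not an open substack of $C^\circ$, and your assertion that $\Pic(C^\circ)\twoheadrightarrow\Pic(W\smallsetminus\{\tilde x\})$ is unjustified: the principle that line bundles extend across smooth points concerns open immersions, not arbitrary \'etale maps. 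More seriously, gluing $\mathcal{F}^\circ$ and $\mathcal{F}^W$ along the \'etale cover $C^\circ\sqcup W\to C$ requires a descent datum, in particular an isomorphism $p_1^*\mathcal{F}^W\cong p_2^*\mathcal{F}^W$ on $W\times_C W$ satisfying the cocycle condition on triple overlaps; you supply none, and ``reasoning \'etale-locally'' does not produce one. This is not a mere technicality: the paper's computation shows that $\Br(C)[3]$ is identified with $\Br(B(\mu_3\times\mu_3))\cong\Z/3\Z$, so $\Br(C)[3]\to\Br(C^\circ)[3]=0$ has nontrivial kernel and the obstruction sits exactly at $\tilde x$.

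The paper avoids gluing entirely. It passes to the seminormalization $C'$ and normalization $\widehat{C}$ of $C$, uses two short exact sequences of $\G_m$-type sheaves to identify the restriction $\Br(C')\to\Br(B(\mu_3\times\mu_3))$ as an isomorphism (after first showing $\Br(C)[3]\cong\Br(C')[3]$), and then invokes Lemma~\ref{lem.etalelocalkill} only to see that $\alpha$ restricts to zero on the residual gerbe $B(\mu_3\times\mu_3)$. If you want to repair your argument, the honest route is essentially this one: replace the \'etale patch $W$ by the henselian local stack at $\tilde x$ and identify its Brauer group with that of the residual gerbe.
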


\begin{proof}
We argue as in \cite[Thm.\ 1.3]{harpazskorobogatov}.
Let $\widehat{C}$ denote the normalization of $C$, and $C'$ the
seminormalization:
\[ \widehat{C}\stackrel{\sigma}\to C'\stackrel{\nu}\to C. \]
Then we have an exact sequence
\[ 0\to \G_{m,C}\to \nu_*\G_{m,C'}\to i_*\cL\to 0, \]
where $\cL$ is an invertible sheaf on $B(\mu_3\times\mu_3)$,
identified with the singular substack of $C$ with inclusion map $i$.
So $\nu$ induces an isomorphism $\Br(C)[3]\to \Br(C')[3]$, and we are
reduced to showing that $\alpha$ restricts to zero in $\Br(C')$.

Identifying as well the singular substack of $C'$ with
$B(\mu_3\times\mu_3)$, with inclusion $i'$, there is an exact sequence
\[ 0\to \G_{m,C'}\to \sigma_*\G_{m,\widehat{C}}\to i'_*\cH\to 0, \]
for a two-dimensional torus $\cH$ over $B(\mu_3\times\mu_3)$, that appears
also in another exact sequence
\[ 0\to \G_{m,B(\mu_3\times\mu_3)}\to j_*\G_{m,B\tilde\mu_3}\to \cH\to 0 \]
that is related to the first by obvious restriction maps.
Here we employ the notation $\tilde\mu_3$ as in the
proof of Proposition \ref{prop.goodbad} and denote by $j$ the
morphism $B\tilde\mu_3\to B(\mu_3\times\mu_3)$.
We obtain a commutative diagram of cohomology groups
\[
\xymatrix{
\Pic(\widehat{C})\ar[r]\ar[d] & H^1(B(\mu_3\times\mu_3),\cH)\ar[r]\ar@{=}[d] & \Br(C')\ar[r]\ar[d] & 0 \\
\Z/3\Z \ar[r] & H^1(B(\mu_3\times\mu_3),\cH) \ar[r] & \Br(B(\mu_3\times\mu_3)) \ar[r] & 0
}
\]
with exact rows.
Since the map on the left is surjective,
we have an isomorphism of Brauer groups on the right.
So we are further reduced
to verifying the triviality of the restriction of $\alpha$ to
$B(\mu_3\times\mu_3)$, which holds by Lemma \ref{lem.etalelocalkill}.
\end{proof}

With the notation of the proof of Proposition \ref{prop.goodbad}
we have
\[ R_0:=A[t_1,t_2]/(t_1^3-f_1,t_2^3-f_2), \]
with $\mu_3\times\mu_3$-action, as well as twists by
characters $\chi_{i,j}$ of $\mu_3\times\mu_3$ defined by
\[
\chi_{i,j}(\lambda,\lambda'):=\lambda^i\lambda'^j.
\]
We introduce the following notation:
\begin{align*}
R_1&:=R_0\otimes \chi_{1,1},&
R_2&:=R_0\otimes \chi_{2,2},\\
R'&:=R_0\otimes \chi_{1,2},&
R''&:=R_0\otimes \chi_{2,1}.
\end{align*}
We let $I_0$ denote the ideal sheaf of $B(\mu_3\times \mu_3)$ in $C$,
with twists $I_i:=I_0\otimes \chi_{i,i}$.
Then there is an exact sequence of coherent sheaves
on a Zariski neighborhood of the point of $\sqrt[3]{(S,\{D_1,D_2\})}$
over $x$, given algebraically by
\[
0\to R_1\oplus R_2\oplus R_0\stackrel{\begin{pmatrix} -t_2^2 & t_1 & 0 \\ -t_1^2 & t_2 & 0 \\ 0 & 0 & 1 \end{pmatrix}}
{\relbar\joinrel\relbar\joinrel\relbar\joinrel\relbar\joinrel\longrightarrow}
R'\oplus R''\oplus R_0
\stackrel{\begin{pmatrix} -t_2 & t_1 & 0 \end{pmatrix}}
{\relbar\joinrel\relbar\joinrel\relbar\joinrel\relbar\joinrel\longrightarrow}
I_1\to 0.
\]
We view this as an analytic local model of an elementary transformation.

\begin{prop}
\label{prop.harderelementarytransformation}
With notation as above, we suppose that $\cA$ is bad at $x$.
Let
$\alpha_0\in H^2(\sqrt[3]{(S,\{D_1,D_2\})},\mu_3)$ be a lift of $\alpha$,
\[ \mathfrak{G}\stackrel{\tau}\to \sqrt[3]{(S,\{D_1,D_2\})} \]
a corresponding gerbe banded by $\mu_3$,
and $\cE$ a rank $3$ vector bundle on $\mathfrak{G}$ such that
$\tau^*\mathcal{A}\cong \End(\cE)$.
Then there exist a line bundle $\cL$ on $\tau^{-1}(C)$ and an exact sequence
\[ 0\to \widetilde{\cE}\to \cE\to h_*(\cI\otimes \cL)\to 0, \]
where $\cI$ denotes the ideal sheaf in $\tau^{-1}(C)$ of its singular locus,
as a reduced substack, and $h$ denotes the inclusion $\tau^{-1}(C)\to \mathfrak{G}$.
Furthermore, the sheaf $\widetilde{\cE}$ on the left is locally free and determines
a sheaf of Azumaya algebras $\widetilde{\mathcal{A}}$ on
$\sqrt[3]{(S,\{D_1,D_2\})}$ that is good at $x$.
\end{prop}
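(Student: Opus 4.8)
The plan is to construct $\widetilde{\cE}$ by performing an elementary transformation along the singular substack of $\tau^{-1}(C)$, guided by the explicit algebraic model above, and then to verify that the resulting Azumaya algebra switches type from bad to good. First I would work $\mu_3$-equivariantly on the gerbe $\mathfrak{G}$, where $\tau^*\cA\cong\End(\cE)$. Since the projective representation \eqref{eqn.projectiverep} is bad, its kernel is the diagonal $\mu_3$ (Proposition \ref{prop.kernel}), so the antidiagonal $\tilde\mu_3$ acts nontrivially on fibers over $x$. By Lemma \ref{lem.BrCprime}, $\alpha$ restricts to zero in $\Br(\tau^{-1}(C))$, so $\cE|_{\tau^{-1}(C)}$ is (after a twist) a genuine, as opposed to twisted, vector bundle, and its restriction to the singular substack $B(\mu_3\times\mu_3)$ decomposes into character eigenspaces. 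The restriction of $\cE$ to the singular locus $\cI$ picks out, via the decomposition, a line subbundle matching the twist $\cL$; this is the datum $h_*(\cI\otimes\cL)$ appearing as the cokernel.

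Next I would use the explicit three-term complex to build the sequence. The model exhibits $I_1$ as the cokernel of the $1\times 3$ matrix $\begin{pmatrix} -t_2 & t_1 & 0\end{pmatrix}$, with kernel given by the image of the $3\times 3$ matrix. The point is that the middle term $R'\oplus R''\oplus R_0$ already represents the \emph{good} twist type: the characters $\chi_{1,2}$ and $\chi_{2,1}$ restrict to opposite characters on each factor in the balanced, antidiagonal configuration, whereas $R_1\oplus R_2\oplus R_0$ with $\chi_{1,1},\chi_{2,2}$ is the bad, diagonal configuration. So the elementary transformation $0\to\widetilde{\cE}\to\cE\to h_*(\cI\otimes\cL)\to 0$, constructed so that near $x$ it reproduces this local model, has the effect of replacing the $R_0$-eigenspace structure of $\cE$ by that of $\widetilde{\cE}$, changing the representation at the node from diagonal to antidiagonal. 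I would define $\cL$ on $\tau^{-1}(C)$ precisely so that the surjection $\cE\to h_*(\cI\otimes\cL)$ matches the local surjection onto $I_1$ (up to the twist by $\chi_{1,1}$ making $I_1=I_0\otimes\chi_{1,1}$), and take $\widetilde{\cE}$ to be its kernel.

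I would then check that $\widetilde{\cE}$ is locally free. Away from the singular locus of $\tau^{-1}(C)$ this is automatic since $h_*(\cI\otimes\cL)$ is supported on the node; at $x$ it follows from the local complex, whose left term $R_1\oplus R_2\oplus R_0$ is manifestly free and injects into $R'\oplus R''\oplus R_0$. Because $\tau^*\cA\cong\End(\cE)$ is pulled back under $\tau$ and the central $\mu_3$ acts by scalars on both $\cE$ and $\widetilde{\cE}$, the endomorphism bundle $\End(\widetilde{\cE})$ descends to a sheaf of Azumaya algebras $\widetilde{\cA}$ on the iterated root stack; its index remains $3$ since the construction is an isomorphism away from the node. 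Finally, the type of $\widetilde{\cA}$ at $x$ is read off from the eigencharacter decomposition of $\widetilde{\cE}$ at the point with stabilizer $\mu_3\times\mu_3$, which by the middle-term characters $\chi_{1,2},\chi_{2,1}$ is antidiagonal, hence good by Definition \ref{defn.goodbad}.

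The main obstacle will be the global consistency of the construction: the explicit matrices are only an \emph{analytic local model} near $x$, and I must produce an honest global short exact sequence of sheaves on $\mathfrak{G}$ whose restriction to a neighborhood of $\tau^{-1}(x)$ agrees with this model while leaving $\cE$ untouched elsewhere. This requires showing that the local line bundle $\cL$ and the surjection extend (or glue) to the whole of $\tau^{-1}(C)$, using the triviality of $\alpha$ on $\tau^{-1}(C)$ from Lemma \ref{lem.BrCprime} to rule out obstructions, and then verifying that the $\mu_3$-equivariance needed for descent of $\End(\widetilde{\cE})$ is preserved throughout. Tracking the character twists carefully — so that the central $\mu_3$ continues to act by scalars and the type computation at $x$ comes out antidiagonal rather than diagonal — is the delicate bookkeeping at the heart of the argument.
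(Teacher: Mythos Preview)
Your overall strategy matches the paper's: use Lemma~\ref{lem.BrCprime} to descend $\cE|_{\tau^{-1}(C)}$ to an honest bundle on $C$, build a surjection to $\cI\otimes\cL$ modeled on the explicit local complex, take the kernel, and read off the new representation type at $x$. But the character bookkeeping at the heart of the argument is reversed. The configuration $\chi_{0,0}\oplus\chi_{1,2}\oplus\chi_{2,1}$ has the \emph{diagonal} $\mu_3$ as kernel of the associated projective representation (indeed $\chi_{1,2}(\lambda,\lambda)=\lambda^3=1$), hence is \emph{bad}; it is $\chi_{0,0}\oplus\chi_{1,1}\oplus\chi_{2,2}$ that has antidiagonal kernel and is \emph{good}. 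Thus in the local model the middle term $R'\oplus R''\oplus R_0$ plays the role of the bad bundle $\cE$, while the left term $R_1\oplus R_2\oplus R_0$ is the good kernel $\widetilde{\cE}$---the opposite of what you assert. Your final sentence then reads the type of $\widetilde{\cE}$ from the middle-term characters, compounding the error.

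There is a second slip: $h_*(\cI\otimes\cL)$ is not supported on the node. The sheaf $\cI$ is the ideal of the singular point \emph{inside} $\tau^{-1}(C)$, so it is a rank-one sheaf on all of $\tau^{-1}(C)$, and the elementary transformation modifies $\cE$ along the entire curve. Local freeness of $\widetilde{\cE}$ away from the node is then the standard fact about quotienting by a line bundle on a smooth divisor; at the node the paper uses Auslander--Buchsbaum ($\cI$ is Cohen--Macaulay of depth $1$, hence of projective dimension $1$). For the type computation the paper restricts the short exact sequence to $B(\mu_3\times\mu_3)$, obtaining a four-term sequence whose Tor term is identified from the local model as $\chi_{1,1}\oplus\chi_{2,2}$; this gives $\widetilde{\cE}|_{B(\mu_3\times\mu_3)}\cong\chi_{1,1}\oplus\chi_{2,2}\oplus\chi_{0,0}$, which is good. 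Once you straighten out which characters are which, your plan goes through along exactly these lines.
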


\begin{proof}
Lemma \ref{lem.BrCprime} tells us that there is a line bundle
$\cT$ on
\[ \mathfrak{G}\times_{\sqrt[3]{(S,\{D_1,D_2\})}}C \]
for which the induced character of the constant $\mu_3$ stabilizer is $\chi_1$.
Consequently, the restriction of $\cE$, tensored with $\cT^\vee$,
descends to a vector bundle $E$ on $C$.
Since we are free to twist $\cT$ by the pullback of any line bundle from $C$,
there is no loss of generality in supposing that the isomorphism type of
$E$ over $x$ is $\chi_{1,2}\oplus \chi_{2,1}\oplus \chi_{0,0}$.

Let $L$ be a line bundle on $C$ whose isomorphism type over $x$ is
$\chi_{1,1}$.
We let $I$ denote the ideal sheaf in $C$ of its singular locus
(as a reduced substack); the fiber of $I$
at the point over $x$ is a two-dimensional vector space
with representation $\chi_{1,0}\oplus \chi_{0,1}$.
So there exists an equivariant surjective linear map from the
fiber of $E$ to the fiber of $I\otimes L$.
This extends to a morphism of modules
(first non-equivariantly, then equivariantly by averaging),
which we may view as a surjective morphism of sheaves
\[ E|_{V\times_SC}\to (I\otimes L)|_{V\times_SC}, \]
for some affine neighborhood $V\subset S$ of $x$.
As explained in \cite[\S 4.3]{KTsurf} this extends,
after possibly modifying $L$ away from $x$, to a surjective morphism
of sheaves on $C$.
Pulling back to the gerbe and tensoring with $\cT$ determines a
surjective morphism of sheaves on $\mathfrak{G}$ and hence an exact sequence
as in the statement.

The ideal sheaf $\cI$ is Cohen-Macaulay of depth $1$, so
by the Auslander-Buchsbaum formula has projective dimension $1$,
and $\widetilde{\cE}$ is locally free.

For the analysis of the type of the sheaf of Azumaya algebras
$\widetilde{\mathcal{A}}$ at $x$, which is sensitive only to the
projective representation of the
$\mu_3\times \mu_3$ stabilizer over $x$,
we may pass to an \'etale neighborhood of $x\in S$ as in
Lemma \ref{lem.etalelocalkill} and thus assume that we have
an exact sequence as in the statement of the proposition on
$\sqrt[3]{(S,\{D_1,D_2\})}$, rather than on a gerbe.
As before, $\cE$ is only determined up to twisting by a line bundle.
Since the map from the Picard group of $\sqrt[3]{(S,\{D_1,D_2\})}$ to the
character group of $\mu_3\times \mu_3$ (given by restriction to the
copy of $B(\mu_3\times \mu_3)$ over $x$) is surjective,
there is no loss of generality in supposing as before that the
isomorphism type of
$\cE$ over $x$ is $\chi_{1,2}\oplus \chi_{2,1}\oplus \chi_{0,0}$,
and of the coherent sheaf on the right is
$\chi_{1,2}\oplus \chi_{2,1}$.
Restriction to the copy of $B(\mu_3\times \mu_3)$ over $x$ determines a
four-term exact sequence with a $\Tor$ sheaf on the left
\[ 0\to \Tor\to \widetilde{\cE}|_{B(\mu_3\times \mu_3)}\to
\chi_{1,2}\oplus \chi_{2,1}\oplus \chi_{0,0}\to
\chi_{1,2}\oplus \chi_{2,1}\to 0. \]
Since the configuration of $D_1$, $D_2$, and $C$ in $S$ at $x$ has a
unique analytic isomorphism type, the model computation just before
the statement of the proposition may be used to see that
\[ \Tor\cong \chi_{1,1}\oplus \chi_{2,2}. \]
It follows that $\widetilde{\mathcal{A}}$ is good at $x$.
\end{proof}

\section{Proof of the main theorem}
\label{sect:proof}
The argument begins as in the proof of the main
theorem of \cite{HKTconic}.
The hypotheses guarantee that the monodromy action on
nontrivial unramified cyclic degree $3$ covers of a nonsingular member
of $|L|$ is transitive; cf.\ the proof of \cite[Lem.\ 3.1]{HKTmoduli}.
We take the space of reduced nodal curves in $|L|$ with nontrivial degree $3$
cyclic \'etale covering, and the member $D=D_1\cup D_2$ with
degree $3$ cyclic \'etale cover, nontrivial over each component,
as pointed variety $(B,b_0)$.
There is an associated element
\[ \alpha\in \Br\big(\sqrt[3]{(S,\{D_1,D_2\})}\big), \]
by Propositions \ref{prop.Brauerexact} and \ref{prop.Brauerextends},
represented by a sheaf of Azumaya algebras $\mathcal{A}$ of index $3$.
By repeated application of Proposition \ref{prop.harderelementarytransformation},
we may suppose that $\mathcal{A}$ is good at all nodes of $D$.
By Proposition \ref{prop.goodbad}, $\mathcal{A}$ descends to the
(singular) root stack $\sqrt[3]{(S,D)}$; we let
\[ \beta\in \Br\big(\sqrt[3]{(S,D)}\big) \]
denote its Brauer class, and
\[ \gamma\in H^2\big(\sqrt[3]{(S,D)},\mu_3\big) \]
a choice of lift, with gerbe $\mathfrak{G}_0$ associated with $\gamma$
and locally free coherent sheaf $\cE_0$ of rank $3$ associated with
the sheaf of Azumaya algebras.

Applying the deformation-theoretic machinery of
\cite[\S 4.3]{HKTconic}, we obtain by (usual) elementary transformation
a subsheaf $\widetilde{\cE}_0$, also locally free of rank $3$, for
which the space of obstructions vanishes.
Upon replacing $B$ by a suitable \'etale neighborhood of $b_0$,
we obtain the root stack $\sqrt[3]{(B\times S,\mathcal{D})}$,
where $\mathcal{D}$ denotes the corresponding family of divisors
in $B\times S$, class
\[ \Gamma\in H^2\big(\sqrt[3]{(B\times S,\mathcal{D})},\mu_3\big) \]
restricting to $\gamma$, gerbe
\[ \mathfrak{G}\to \sqrt[3]{(B\times S,\mathcal{D})} \]
restricting to $\mathfrak{G}_0$, and locally free sheaf
$\widetilde{\cE}$ on $\mathfrak{G}$ restricting to $\widetilde{\cE}_0$.
The locally free sheaf $\widetilde{\cE}$ determines a smooth $\PP^2$-bundle
\[
\mathcal{P}\to \sqrt[3]{(B\times S,\mathcal{D})}.
\]

We now apply the final step in the proof of \cite[Thm.\ 1.4]{KTsurf}
to the $\PP^2$-bundle $\mathcal{P}$.
The construction of good models of Brauer-Severi surface
bundles from op.\ cit., applied to $\mathcal{P}$ produces a
Brauer-Severi surface bundle
\[ \mathcal{X}\to B\times S. \]
Over $B$, this is a flat family of Brauer-Severi surface bundles over $S$.
Since the discriminant curve of the fiber over $b_0$ has two components, and
the Brauer class is given by nontrivial \'etale cyclic covers,
this fiber has nontrivial unramified Brauer group \cite{AM}.
Such a Brauer-Severi surface bundle has singularities of toric type, and
these are mild enough for the specialization method to be applicable.
We conclude that the very general Brauer-Severi surface bundle in
this family is not stably rational.

\bibliographystyle{plain}
\bibliography{pn}

\begin{thebibliography}{10}

\bibitem{AGV}
D.~Abramovich, T.~Graber, and A.~Vistoli.
\newblock Gromov-{W}itten theory of {D}eligne-{M}umford stacks.
\newblock {\em Amer. J. Math.}, 130(5):1337--1398, 2008.

\bibitem{aovtwisted}
D.~Abramovich, M.~Olsson, and A.~Vistoli.
\newblock Twisted stable maps to tame {A}rtin stacks.
\newblock {\em J. Algebraic Geom.}, 20(3):399--477, 2011.

\bibitem{alpergood}
J.~Alper.
\newblock Good moduli spaces for {A}rtin stacks.
\newblock {\em Ann. Inst. Fourier (Grenoble)}, 63(6):2349--2402, 2013.

\bibitem{AM}
M.~Artin and D.~Mumford.
\newblock Some elementary examples of unirational varieties which are not
  rational.
\newblock {\em Proc. London Math. Soc. (3)}, 25:75--95, 1972.

\bibitem{cadman}
C.~Cadman.
\newblock Using stacks to impose tangency conditions on curves.
\newblock {\em Amer. J. Math.}, 129(2):405--427, 2007.

\bibitem{CTP}
J.-L. Colliot-Th{\'e}l{\`e}ne and A.~Pirutka.
\newblock Hypersurfaces quartiques de dimension 3: non-rationalit\'e stable.
\newblock {\em Ann. Sci. \'Ecole Norm. Sup. (4)}, 49(2):371--397, 2016.

\bibitem{GB}
A.~Grothendieck.
\newblock Le groupe de {B}rauer. {I}--{III}.
\newblock In {\em Dix expos\'es sur la cohomologie des sch\'emas}, volume~3 of
  {\em Adv. Stud. Pure Math.}, pages 46--188. North-Holland, Amsterdam, 1968.

\bibitem{harpazskorobogatov}
Y.~Harpaz and A.~N. Skorobogatov.
\newblock Singular curves and the \'etale {B}rauer-{M}anin obstruction for
  surfaces.
\newblock {\em Ann. Sci. \'Ecole Norm. Sup. (4)}, 47(4):765--778, 2014.

\bibitem{HKTmoduli}
B.~Hassett, A.~Kresch, and Yu. Tschinkel.
\newblock On the moduli of degree 4 {D}el {P}ezzo surfaces.
\newblock In {\em Development of moduli theory ({K}yoto 2013)}, volume~69 of
  {\em Adv. Stud. Pure Math.}, pages 349--386. Math. Soc. Japan, Tokyo, 2016.

\bibitem{HKTconic}
B.~Hassett, A.~Kresch, and Yu. Tschinkel.
\newblock Stable rationality and conic bundles.
\newblock {\em Math. Ann.}, 365(3-4):1201--1217, 2016.

\bibitem{kontsevichtschinkel}
M.~Kontsevich and Yu. Tschinkel.
\newblock Specialization of birational types, 2017.
\newblock {\tt arXiv:1708.05699}.

\bibitem{KTsurf}
A.~Kresch and Yu. Tschinkel.
\newblock Models of {B}rauer-{S}everi surface bundles, 2017.
\newblock {\tt arXiv:1708.06277}.

\bibitem{lieblicharithmeticsurface}
M.~Lieblich.
\newblock Period and index in the {B}rauer group of an arithmetic surface.
\newblock {\em J. Reine Angew. Math.}, 659:1--41, 2011.
\newblock With an appendix by Daniel Krashen.

\bibitem{nicaiseshinder}
J.~Nicaise and E.~Shinder.
\newblock The motivic nearby fiber and degeneration of stable rationality,
  2017.
\newblock {\tt arXiv:1708.02790}.

\bibitem{voisin}
C.~Voisin.
\newblock Unirational threefolds with no universal codimension {$2$} cycle.
\newblock {\em Invent. Math.}, 201(1):207--237, 2015.

\end{thebibliography}

\end{document}